\documentclass{amsart}
\usepackage{amsfonts,amssymb,amsmath,amsthm}
\usepackage{url}
\usepackage{enumerate}

\usepackage{amsbsy,hyperref,cite,epstopdf}
\usepackage[all,v2,cmtip]{xy}

\urlstyle{sf}
\newtheorem{thrm}{Theorem}[section]
\newtheorem{lem}[thrm]{Lemma}
\newtheorem{prop}[thrm]{Proposition}
\newtheorem{cor}[thrm]{Corollary}
\theoremstyle{definition}
\newtheorem{definition}[thrm]{Definition}
\newtheorem{remark}[thrm]{Remark}
\newtheorem{conjecture}[thrm]{Conjecture}
\newtheorem{assumption}[thrm]{Assumption}
\numberwithin{equation}{section}

\newcommand{\de}[4]{\delta([\nu^{#1}\rho_{#2}, \nu^{#3}\rho_{#4}])}

\author{Yeansu Kim}
\address{
Department of Mathematics Education\\
Chonnam National University\\
Gwangju city, Korea}
\email{ykim@jnu.ac.kr}
\thanks{}

\keywords{Langlands-Shahidi method, the generic Arthur packet conjecture, $L$-packet}
\subjclass{Primary 11F70, Secondary 22E50}
\begin{document}

\title[Langlands-Shahidi $L$-functions for $GSpin$ groups]{Langlands-Shahidi $L$-functions for $GSpin$ groups and the generic Arthur packet conjecture}

\begin{abstract}
We prove that $L$-functions from Langlands-Shahidi method in the case of $GSpin$ groups over a non-Archimedean local field of characteristic zero are Artin $L$-functions through the local Langlands correspondence. It has an application on the proof of a weak version of the generic Arthur packet conjecture. Furthermore, we study and describe a local $L$-packet that contains a generic member in the case of $GSpin$ groups. Using this description of a local $L$-packet, we strengthen a weak version of the generic Arthur packet conjecture in those cases (local version of the generalized Ramanujan conjecture).
\end{abstract}
\maketitle

\section{Introduction}
\label{Sec1}

The construction and understanding of local $L$-packets, the disjoint subsets of irreducible admissible representations of a quasi-split reductive group $\textbf{G}$ over a $p$-adic local field, has been one of the main subjects in the Langlands program. The generic representations (representations with a Whittaker model) play an important role in parametrizing tempered $L$-packets. For example, it has been conjectured that, when $F$ is local, every tempered $L$-packet contains a generic representation. This conjecture is called the tempered $L$-packet conjecture (see \cite{Shahidi:1990a} for more details). In this paper, we consider the converse to the tempered $L$-packet conjecture. The converse to this conjecture is not true in general. However, if we consider a local $L$-packet which corresponds to an Arthur parameter, the converse is expected to be true. More precisely, let $\psi$ be an Arthur parameter of $\textbf{G}$ over a $p$-adic local field $F$ as defined in Section \ref{Sec5}. There is a Langlands parameter $\phi_{\psi}$ which corresponds to the Arthur parameter $\psi$. We consider an $L$-packet $\Pi(\phi_{\psi})$ that is attached to this Langlands parameter $\phi_{\psi}$. The following conjecture is called a weak version of the generic Arthur packet conjecture \cite{Shahidi:2011}:

\begin{conjecture}\label{Conjecture A} Assume that there exists an $L$-packet $\Pi(\phi_{\psi})$ that corresponds to the Langlands parameter $\phi_{\psi}$, where $\phi_{\psi}$ corresponds to the Arthur parameter $\psi$. If $\Pi(\phi_{\psi})$ has a generic member, then the Langlands parameter $\phi_{\psi}$ is tempered.
\end{conjecture}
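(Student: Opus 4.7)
The plan is to exploit the paper's main theorem---that the Langlands--Shahidi $L$-functions on the $GSpin$ side agree, via the local Langlands correspondence, with Artin $L$-functions on the Galois side---in order to turn the existence of a generic member of $\Pi(\phi_\psi)$ into a structural constraint on the Arthur $SL_2$ of $\psi$. The overall shape of the argument parallels Heiermann--Mui\'c's treatment of the classical groups case, with the new ingredient being the $L$-function identification for $GSpin$.

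First I would unpack the Langlands parameter $\phi_\psi(w,x) = \psi(w,x,\mathrm{diag}(|w|^{1/2},|w|^{-1/2}))$ into its Jordan block decomposition. Writing $\psi \cong \bigoplus_i \rho_i \boxtimes \mathrm{Sym}^{a_i-1}\boxtimes \mathrm{Sym}^{b_i-1}$ with $\rho_i$ irreducible bounded, each summand contributes a Speh-type segment twisted by a real exponent determined by $b_i$. Temperedness of $\phi_\psi$ is equivalent to $b_i = 1$ for all $i$, i.e.\ to the second $SL_2$ being trivial, so the goal is to rule out any $b_i \geq 2$ under the existence of a generic member.

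Next, assuming $\pi \in \Pi(\phi_\psi)$ is generic, I would realise $\pi$ as the Langlands quotient of a standard module $\mathrm{Ind}_P^H(\tau \otimes \chi)$ with $\tau$ tempered generic on a Levi and $\chi$ in the positive chamber, where the inducing data is read off from the Jordan decomposition of $\phi_\psi$ through the local Langlands correspondence. By the Casselman--Shahidi theorem (the standard module conjecture), the generic Langlands quotient coincides with the full standard module, so $\mathrm{Ind}_P^H(\tau \otimes \chi)$ is irreducible. Through Shahidi's description of the reducibility of such induced representations in terms of poles of his $L$- and $\gamma$-factors of $\tau$ at the relevant shifts of $\chi$, irreducibility translates into the holomorphy of specific Langlands--Shahidi $L$-functions at those shifts.

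The final step is to apply the main theorem of the paper to rewrite each of these Langlands--Shahidi $L$-functions as the Artin $L$-function of the appropriate tensor-product representation of the parameter $\phi_\tau$, and to compare with what the hypothesized Jordan decomposition of $\phi_\psi$ predicts: a non-trivial Arthur $SL_2$ block would force precisely one of these Artin $L$-functions to have a pole in the half-plane where the previous paragraph demands holomorphy, a contradiction. Hence every $b_i = 1$ and $\phi_\psi$ is tempered. The main obstacle I anticipate is the bookkeeping in this last step: one must pair each potential Arthur block $b_i \geq 2$ with a concrete reducibility point of a concrete induced representation, and verify that the $L$-function identification is available for that exact Levi embedding---in particular that the local Langlands correspondence for $GSpin$ is compatible, on the generic member, with the $L$-function matching in a sufficiently functorial way to make the Galois-side pole count conclusive.
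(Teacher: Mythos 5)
Your argument is essentially what the paper does: the paper proves Conjecture~\ref{Conjecture A} for $GSpin$ by combining Theorem~\ref{Theorem B} (together with Henniart's result identifying the twisted symmetric/exterior square factors with Artin factors) with Shahidi's Theorem~5.1 of \cite{Shahidi:2011}, and that cited theorem is precisely the standard-module-conjecture plus holomorphy-of-tempered-$L$-functions argument you have unfolded, the paper simply treating it as a black box. The one slip is attribution: the architecture you are following is Shahidi's \cite{Shahidi:2011}, not Heiermann--Mui\'c's --- \cite{Heiermann_Muic:2007} supplies the standard module conjecture as an ingredient, while the classical-group analogues of Conjecture~\ref{Conjecture A} were treated by Ban, Jantzen--Liu, and Liu \cite{Ban:2006, Jantzen_Liu:2014, Liu:2011}.
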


The main result of our paper, the equality of $L$-functions from Langlands-Shahidi method for $GSpin$ groups and the corresponding Artin $L$-functions through the local Langlands correspondence (Theorem \ref{Theorem B}), implies Conjecture \ref{Conjecture A} for $GSpin$ groups \cite[Theorem 5.1]{Shahidi:2011}. To explain our results more precisely, let $\textbf{G}_n:=\textbf{GSpin}_{2n}$ or $\textbf{GSpin}_{2n+1}$ (resp. $\textbf{GL}_m$) denote the general spin group of semisimple rank $n$ (resp. general linear group of degree $m$) over $F$ and let $G_n$ (resp. $GL_m$) denote the group of $F$-points of $\textbf{G}_n$ (resp. $\textbf{GL}_m$). Briefly, in the case of $GSpin$ groups, there are two types of $L$-functions that Shahidi defined in a series of papers \cite{Shahidi:1978, Shahidi:1981, Shahidi:1988, Shahidi:1990a, Shahidi:2010} (Langlands-Shahidi method). The first $L$-function is a Rankin product $L$-function for $GL \times GSpin$, denoted $L(s, \pi' \times \pi)$, where $\pi'$ (resp. $\pi$) denotes an irreducible admissible generic representation of $GL_m$ (resp. $G_n$) and the second $L$-function is either a twisted symmetric square $L$-function or a twisted exterior square $L$-function depending on whether $G_n$ is an even $GSpin$ group or an odd $GSpin$ group. (See Section \ref{Sec3.2} for more details). 

Henniart \cite{Henniart:2010} has proved that both the twisted symmetric square and twisted exterior square $L$-functions are Artin $L$-functions (note that it is also recently proved that the twisted exterior and symmetric square $\gamma$-factors are Artin factors in \cite{Cogdell_Shahidi_Tsai:2014, Ganapathy_Lomeli:2014}). Therefore, it remains to consider the Rankin product $L$-functions for $GL \times GSpin$. We prove the following result (Theorem \ref{E:main:general}):

\begin{thrm}\label{Theorem B} Let $\pi$ be an irreducible admissible generic representation of $G_n$. Then, there exists an irreducible admissible representation $\Pi$ of $GL$ such that for any irreducible admissible generic representation $\pi'$ of $GL_m$ we have
$$L(s, \pi' \times \pi) = L(s, \pi' \times \Pi) \textit{ and } \gamma(s, \pi' \times \pi, \psi_F) = \gamma(s, \pi' \times \Pi, \psi_F).$$
\end{thrm}

Here, the $\gamma$-functions, i.e., $\gamma(s, \pi' \times \pi, \psi_F)$ and $\gamma(s, \pi' \times \Pi, \psi_F)$ are complex functions defined in \cite[Theorem 3.5]{Shahidi:1990a}. The $L$-function in the right hand side, i.e., $L(s, \pi' \times \Pi)$ is a Rankin-Selberg $L$-function \cite{Jacquet_PS_Shalika:1983}. The Rankin-Selberg $L$-functions are Artin $L$-functions due to the local Langlands correspondence for general linear groups \cite{Harris_Taylor:2001, Henniart:2000}. Therefore, Theorem \ref{Theorem B} implies that the Rankin product $L$-functions for $GL \times GSpin$ are also Artin $L$-functions. 

As an application of our main theorem (Theorem \ref{Theorem B} and \cite[Theorem 5.1]{Shahidi:2011}), we prove the weak version of the generic Arthur packet conjecture for $GSpin$ groups:
\begin{thrm}\label{Theorem A}
Conjecture \ref{Conjecture A} is true in the case of $GSpin$ groups.
\end{thrm}


\begin{remark}\label{Heiermann and Kim}
In \cite{Heiermann_Kim:2017}, Heiermann and the author proved Theorem \ref{Theorem B} with different approach following Heiemrann's construction of Langlands parameters \cite{Heiermann:2004, Heiermann:2006, Heiermann:2006b}. In the present paper, we give another proof of Theorem \ref{Theorem B} using classification results Section \ref{Sec3.1} or \cite{YKim:2015, YKim:2016}. However, note that the classification results behave an important role when we prove several properties in local Langlands functoriality from $GSpin$ groups to $GL$ groups and $L$-packets, for example, temperedness is preserved through the local Langlands functoriality from GSpin groups to $GL$ groups (Proposition \ref{tempered} and Lemma \ref{TT}). Furthermore, as an application of those properties, we define and study generic $L$-packets (Definition \ref{Def:generic L-packet}) and we strengthen Conjecture \ref{Conjecture A} in the case of $GSpin$ groups (The strong version of the generic Arthur packet conjecture for $GSpin$ groups).
\end{remark}


The second purpose of our paper is to strengthen the generic Arthur packet conjecture. There are two points to strengthen the conjecture. First part is to remove the assumption on the existence of $L$-packets, i.e., define $L$-packets. Second part is to get stronger results of the conjecture, i.e., prove strong version of the generic Arthur packet conjecture. 

First, We define and study $L$-packets that contain a generic member, called generic $L$-packets, in the case of $GSpin$ groups to make Conjecture \ref{Conjecture A} be less conjectural. More precisely, in \cite{Heiermann_Kim:2017}, Heiermann and the author constructed Langlands parameters that correspond to irreducible admissible generic representation of $GSpin$ groups. Next project is to describe a set of all irreducible admissible representations that correspond to the Langlands parameters, generic $L$-packets.
First step of this project is to describe all generic representations in generic $L$-packets in terms of $L$-functions from Langlands-Shahidi method (Theorem \ref{generic reps in $L$-packet}). Following this result, we define $L$-packets as the set of irreducible admissible representations that share the same local factors (Definition \ref{Def:generic L-packet}) with one assumption on the existence of $L$-functions for non-generic representation 

Second, as an application of the description of the generic $L$-packets, we strengthen a weak version of the generic Arthur packet conjecture. The following is called a strong version of the generic Arthur packet conjecture:

\begin{conjecture}\label{Conjecture A'} 
Let $\psi$ and $\phi_{\psi}$ be as in Conjecture \ref{Conjecture A} and let $\Pi(\phi_{\psi})$ be the $L$-packet defined in Definition \ref{Def:generic L-packet}. If $\Pi(\phi_{\psi})$ has a generic member, then $\Pi(\phi_{\psi})$ is a tempered $L$-packet. (Here, tempered $L$-packet means that all members in an $L$-packet $\Pi(\phi_{\psi})$ are tempered).
\end{conjecture}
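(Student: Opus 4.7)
The plan is to deduce Conjecture~\ref{Conjecture A'} from its weak counterpart, Theorem~\ref{Theorem A}, by propagating temperedness from the generic member to every member of the packet through Lemma~\ref{Lemma A1'}. The three ingredients of the argument are: Theorem~\ref{Theorem A} (which gives a tempered Langlands parameter), the compatibility of the generic local Langlands correspondence of Theorem~\ref{HK} with the Langlands classification (which upgrades temperedness of the parameter to temperedness of the generic representation itself), and Lemma~\ref{Lemma A1'} (which transports this temperedness to the rest of the packet).

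First, I would apply Theorem~\ref{Theorem A} to the given $L$-packet: since by hypothesis $\Pi(\phi_{\psi})$ contains a generic member, the Langlands parameter $\phi_{\psi}$ is tempered, i.e.\ its image is bounded modulo the center of the dual group. Second, I would upgrade this to the statement that the generic member itself is a tempered representation. Let $\pi \in \Pi(\phi_{\psi})$ be generic; by Theorem~\ref{HK} it has a Langlands parameter $\phi_{\pi}$, and the identification of generic $L$-packets via $L$- and $\gamma$-factors in Definition~\ref{Definition C} gives $\phi_{\pi} \cong \phi_{\psi}$, which has just been shown to be tempered. The standard compatibility between the generic local Langlands correspondence and the Langlands classification — which in our setup is precisely what Proposition~\ref{tempered} and Lemma~\ref{TT} deliver via Theorem~\ref{Theorem B} — then forces $\pi$ to be a tempered representation. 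Third, I would feed this into Lemma~\ref{Lemma A1'}: since $\Pi(\phi_{\psi})$ contains a tempered generic representation, every member of $\Pi(\phi_{\psi})$ is tempered, which is exactly the conclusion of Conjecture~\ref{Conjecture A'}.

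The final deduction sketched above is essentially formal once Theorem~\ref{Theorem A} and Lemma~\ref{Lemma A1'} are in place; the real substance has been absorbed into those two inputs. The main obstacle is therefore Lemma~\ref{Lemma A1'} itself, which rests on Assumption~\ref{Assumption C} about the extension of Langlands--Shahidi local factors to non-generic members of a generic $L$-packet, together with the description of such packets through Definition~\ref{Definition C}. A further, more global, conceptual gap flagged in Remark~\ref{Harris} is the nonemptiness of the $L$-packets attached to the relevant Langlands parameters, which hinges on the local descent for $GSpin$ groups currently being developed by Lau and Kaplan--Lau--Liu.
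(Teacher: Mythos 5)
Your proposal matches the paper's proof of Theorem \ref{StrongConjG}: reduce to the weak version (Theorem \ref{Theorem A}) to get that $\phi_{\psi}$ is tempered, transfer temperedness to the generic member $\pi$ via the $GL$ side and Lemma \ref{TT}, and propagate it to the whole packet by Lemma \ref{tempered $L$-packet}. The paper spells out the middle step more explicitly --- composing $\phi_{\psi}$ with the embedding $\iota\colon {}^{L}G_n \hookrightarrow {}^{L}GL_{2n}$, invoking the local Langlands correspondence for $GL_{2n}$ to produce a tempered representation $\Pi'$, verifying via the equality of $\gamma$- and $L$-factors that $\Pi'$ is the local functorial lift of $\pi$, and only then applying Lemma \ref{TT} --- but your sketch names the same ingredients, so the two approaches coincide.
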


\begin{remark}
The Conjecture \ref{Conjecture A'} can be considered to be a local version of the generalized Ramanujan conjecture and the conjecture itself is related to the generalzied Ramanujan conjecture. (See \cite{Shahidi:2011} for more details).
\end{remark}

We prove the following:

\begin{thrm}\label{Theorem A'}
 The Conjecture \ref{Conjecture A'} is true in the case of $GSpin$ groups.
\end{thrm}

Let us briefly explain the main idea of the proof. Theorem \ref{Theorem A} implies that the corresponding Langlands parameters are tempered. Furthermore, it is well known that, in the case of general linear groups, $L$-packets for $GL$ are tempered if and only if the corresponding $L$-parameters are tempered \cite{Harris_Taylor:2001, Henniart:2000}. More precisely, let $\pi \in \Pi(\phi_{\psi})$ be an irreducible admissible generic representation of $GSpin$ groups and let $\Pi$ be its functorial lift to $GL$. Then we have

$$\xymatrixcolsep{7pc}\xymatrix{
\pi: generic \ar[d]_{Theorem \ A} &\Pi:tempered \\
\phi_{\psi}: tempered \ar[r]^{\textit{\cite[Theorem \ 5.1]{Heiermann_Kim:2017}}} &\phi_{\Pi}:tempered \ar[u]_{\textit{\cite{Harris_Taylor:2001, Henniart:2000}}}}$$

The first step of the proof is to show that $\pi$ in the above diagram is tempered. In other words, we show that the property `being tempered' is compatible with the local functoriality from $GSpin$ groups to $GL$ groups (Lemma \ref{TT}):

\begin{lem}\label{Lemma A'} Let $\pi$ be an irreducible admissible generic representation of $GSpin$ groups. Then, $\pi$ is tempered if and only if $\Pi$ is tempered.
\end{lem}
\begin{remark}
Lemma \ref{Lemma A'} does not depend on our definition of $L$-packets (Definition \ref{Def:generic L-packet}) and therefore it is 
unconditional.
\end{remark}

Lemma \ref{Lemma A'} implies that $L$-packet $\Pi(\phi_{\psi})$ contains a tempered representation $\pi$. Then, the final step of the proof is to show that all other members in $\Pi(\phi_{\psi})$ are tempered as well. This follows by Lemma \ref{tempered $L$-packet}.

\begin{remark}
The techniques used from Theorem \ref{HeK} to Lemma \ref{Lemma A'}, i.e., Section \ref{Sec5} can be applied to the cases of classical groups and we leave this for future work (Remark \ref{Classical groups}).
\end{remark}

The paper is organized as follows. In Section \ref{Sec2}, we introduce the notation. In Section \ref{Sec3}, we briefly recall the results on the classification of strongly positive representations. We also recall the $L$-functions from Langlands-Shahidi method in the case of $GSpin$ groups. In Section \ref{Sec4}, we show that the Rankin product $L$-functions for $GL \times GSpin$ are equal to Artin $L$-functions. More precisely, we first state the result on the equality of $L$-functions in the supercuspidal case (Theorem \ref{sc}). In Section \ref{Sec4.2}, we formulate the classificataion result in the generic case (the embedding of discrete series generic representations of $GSpin$ groups; Theorem \ref{classification:generic}). We then use this classification result to show the equality of $L$-functions in the case of discrete series representations (Proposition \ref{ds}). After that, we use the Langlands classification and the properties of tempered representations to show the equality of $L$-functions in general (Proposition \ref{tempered}, Theorem \ref{E:main} and Theorem \ref{E:main:general}). In Section \ref{Sec5}, we describe and study $L$-packets that contain generic representations. We also introduce the generic Arthur packet conjecture. This conjecture is one application of our main result, i.e., the equality of $L$-functions and we prove it for $GSpin$ groups (the weak version of the generic Arthur packet conjecture: Theorem \ref{ConjG:GSpin}). Furthermore, we strengthen the conjecture and prove it in the case of $GSpin$ groups (the strong version of the generic Arthur packet conjecture: Theorem \ref{StrongConjG}).

\section{Notation}
\label{Sec2}

 Let $F$ be a non-Archimedean local field of characteristic zero and let $\textbf{G}_n:=\textbf{GSpin}_{2n}$ or $\textbf{GSpin}_{2n+1}$ (resp. $\textbf{GL}_m$) denote the general spin group of semisimple rank $n$ (resp. general linear group of degree $m$) over $F$ and let $G_n$ (resp. $GL_m$) denote the group of $F$-points of $\textbf{G}_n$ (resp. $\textbf{GL}_m$). 
Let $\textbf{s}=(n_1, n_2, \ldots, n_k)$ be an ordered partition of some $n'$ such that $n' \leq n$. Let $P_{\textbf{s}}=M_{\textbf{s}}N_{\textbf{s}}$ denote a standard $F$-parabolic subgroup of $G_n$ that corresponds to the partition ${\textbf{s}}$. The Levi factor $M_{\textbf{s}}$ is isomorphic to $GL_{n_1} \times GL_{n_2} \times \cdots \times GL_{n_k} \times G_{n-n'} \ (\text{see \cite{Asgari:2002}} ).$
We denote the induced representation $Ind_{P_{\bf s}}^{G_n}(\rho_1 \otimes \cdots \otimes \rho_k \otimes \tau)$ by
$$ \rho_1 \times \cdots \times \rho_k \rtimes \tau$$
\noindent where each $\rho_i$ (resp. $\tau$) is a representation of some $GL_{n_i}$ (resp. $G_{n-n'}$). In particular, $\operatorname{Ind}_{P_{\textbf{s}}}^{G_n}$ is a functor from admissible representations of $M_{\textbf{s}}$ to admissible representations of $G_n$ that sends unitary representations to unitary representations.

In the case of $GL$, we denote the induced representation $\operatorname{Ind}_{P'}^{GL_n}(\rho_1 \otimes \cdots \otimes \rho_k)$ by
$$\rho_1 \times \cdots \times \rho_k$$
\noindent such that $P'=M'N'$ is a standard $F$-parabolic subgroup of $GL_n$ where $M' \cong GL_{n_1} \times GL_{n_2} \times $ $\cdots \times GL_{n_k}$ and each $\rho_i$ is a representation of $GL_{n_i}$ for $i=1, \ldots, k$. 

We also follow the notation in \cite{Bernstein_Zelevinsky:1977, Zelevinsky:1980}. Let $\rho$ be an irreducible unitary supercuspidal representation of some $GL_p$ and let $\nu$ be a character of $GL_p$ defined by $|\operatorname{det}|_F$. We define the segment, $\Delta:=[\nu^a \rho, \nu^{a+k} \rho] = \{\nu^a \rho, \nu^{a+1} \rho, \ldots \nu^{a+k} \rho\}$ where $a \in \mathbb{R}$ and $k \in \mathbb{Z}_{\geq 0}$. It is well known that the induced representation $\nu^{a+k} \rho \times \nu^{a+k-1} \rho \times \cdots \times \nu^{a} \rho$ has a unique irreducible subrepresentation, which we denote by $\delta(\Delta)$. The $\delta(\Delta)$ is an essentially square-integrable representation attached to $\Delta$ (The classification of discrete series representations in the case of general linear groups; see \cite{Zelevinsky:1980}, 3.1).

Let $\rho$ (resp. $\tau$) be an irreducible supercuspidal representation of $GL_m$ (resp. $G_n$). For a positive half-integer $\beta$ we say that ($\rho, \tau$) satisfies $(C(\beta))$ if $$\nu^{\beta} \rho \rtimes \tau \ \ \textit{is reducible and} \ \  \nu^{\gamma} \rho \rtimes \tau \ \ \textit{is irreducible for all} \ \ \gamma \in \mathbb{R} \ \  \textit{with} \ \ |\gamma| \neq \beta$$

We sometimes use the notations $\pi_{cusp}, \pi_{ds}$ and $\pi_t$ for supercuspidal, discrete series and tempered representations of $GSpin$ groups respectively. The local functorial lifts of $\pi_{cusp}, \pi_{ds}$ and $\pi_t$ are denoted by $\Pi_{cusp}, \Pi_{ds}$ and $\Pi_t$ respectively. 

For an irreducible representation $\pi$, the central character of $\pi$ is denoted by $\omega_{\pi}$.

\section{Preliminaries}
\label{Sec3}

\subsection{Classification of strongly positive representations of $GSpin$ groups }
\label{Sec3.1}

Let us briefly recall the results on the classification of strongly positive representations of $GSpin$ groups \cite{YKim:2015, YKim:2016}. Note that, in \cite{YKim:2015, YKim:2016}, the author closely follows the methods introduced in \cite{Matic:2011} and generalize those results to the case of $GSpin$ groups.

\begin{definition}[Strongly positive]
An irreducible admissible representation $\sigma$ of $G_n$ is called strongly positive if for each representation
$\nu^{s_1} \rho_1 \times \nu^{s_2} \rho_2 \times \cdots \times \nu^{s_k} \rho_k \rtimes \sigma_{cusp}$, where each $\rho_i, i=1, 2, \cdots, k$, is an irreducible supercuspidal unitary representation of some $GL_{n_i}$, $\sigma_{cusp}$ an irreducible supercuspidal representation of $G_{n'}$  and
$s_i \in \mathbb{R}$,  $ i=1, 2, \cdots,k$, such that
$$\sigma \hookrightarrow \nu^{s_1}\rho_1 \times \nu^{s_2}\rho_2 \times \cdots \times \nu^{s_k}\rho_k \rtimes \sigma_{cusp},$$
\noindent we have $s_i >0$ for each $i$.
\end{definition}

Let $SP$ denote the set of all strongly positive representations of $GSpin$ groups and let $LJ$ denote the set of $(Jord, \sigma')$, where $Jord= \displaystyle\bigcup\limits_{i=1}^{k}
\displaystyle\bigcup\limits_{j=1}^{k_i} \{ (\rho_i , b_j^{(i)}) \} $ and $\sigma'$ be an irreducible supercuspidal representation of $GSpin$ groups such that
\begin{enumerate}[(i)]
  \item $ \{ \rho_1, \rho_2, \ldots, \rho_k \}$ is a (possibly empty) set of mutually non-isomorphic irreducible and  essentially self-dual supercuspidal unitary representations of $GL$ such that $\nu^{a_{\rho_i}} \rho_i \rtimes \sigma'$ is reducible for $a_{\rho_i} > 0$ (defining $a_{\rho_i}$ due to the uniqueness of the reducibility point; See Remark for more details),
  \item $k_i = \lceil a_{\rho_i}\rceil$, and
  \item for each $i=1, 2, \ldots, k,$ a sequence of real numbers $b_1^{(i)}, b_2^{(i)}, \ldots, b_{k_i}^{(i)}$ satisfies $a_{\rho_i} - b_j^{(i)} \in \mathbb{Z}$, for $j=1, 2, \ldots, k_i$, and $-1 < b_1^{(i)} < b_2^{(i)} < \cdots < b_{k_i}^{(i)}$.
\end{enumerate}

The following is the main results in \cite{YKim:2015, YKim:2016}, i.e., classification of strongly positive representations of $GSpin$ groups:

\begin{thrm}\label{SPDS:SC}
 There exists a bijective mapping $\operatorname{\Phi}$ between $SP$ and $LJ$. More precisely, $\operatorname{\Phi}$ is constructed as follows: let $\sigma \in SP$. It can be considered to be the unique irreducible subrepresentation of the form 
\begin{align}\label{SPDS:induced}
( \displaystyle\prod\limits_{i=1}^{k} \displaystyle\prod\limits_{j=1}^{k_i} 
\de {a_{\rho_i}- k_i +j} {i} {b_j^{(i)}} {i} ) \rtimes \sigma'.
\end{align}
 Then, we define $\operatorname{\Phi}(\sigma)$ as $(\displaystyle\bigcup\limits_{i=1}^{k}
\displaystyle\bigcup\limits_{j=1}^{k_i} \{ (\rho_i , b_j^{(i)}) \}, \sigma') \in LJ. $
\end{thrm}

The strongly positive representations are special kinds of discrete series due to the Casselman’s square integrability criterion in \cite{WKim:2009}. Furthermore, the strongly positive representations can be considered to be basic building blocks for discrete series representations \cite{YKim:2015, YKim:2016}:

\begin{thrm}\label{DS:SPDS}
Let $\sigma$ denote a discrete series representation of $G_n$. Then there exists an embedding of the form
$$ \sigma \hookrightarrow \delta([\nu^{a_1} \rho_1, \nu^{b_1} \rho_1]) \times \delta([\nu^{a_2} \rho_2, \nu^{b_2} \rho_2]) \times \cdots \times
\delta([\nu^{a_r} \rho_r, \nu^{b_r} \rho_r]) \rtimes \sigma_{sp} $$
where $a_i \leq 0, a_i + b_i > 0$ and $ \rho_i$ is an irreducible unitary supercuspidal representation of $GL$ for $i=1, \ldots, r$, where $\sigma_{sp}$ is a strongly positive representation of $GSpin$ (we allow $k=0$).
\end{thrm}

\begin{remark}\label{remark:reducibility point}
The results on the classification of strongly positive representations for $GSpin$ groups \cite{YKim:2015, YKim:2016} depend on the assumption that there exists a unique non-negative real number $a$, such that $\nu^{a} \rho \rtimes \sigma$ reduces, where $\rho$ (resp. $\sigma$) denote an irreducible unitary supercuspidal representation of $GL_n$ (resp. $G_n$) \cite{Silberger:1980}. However, in the generic case, Shahidi proved that the reducibility points are unique and those are either 0, 1/2, or 1 \cite[Corollary 7.6 and Theorem 8.1]{Shahidi:1990a}. Therefore, the classification of strongly positive generic representations (Theorem \ref{classification:generic}) is unconditional.
\end{remark}

\subsection{$L$-functions from Langlands-Shahidi method in the case of $GSpin$ groups}
\label{Sec3.2}

Briefly, Langlands-Shahidi method defines several local $L$-functions that is attached to irreducible admissible generic representations of a connected reductive groups via the theory of intertwining operators \cite{Shahidi:1978, Shahidi:1981, Shahidi:1988, Shahidi:1990a, Shahidi:2010}. In the case of $GSpin$ groups, there are two types of $L$-functions. To describe those two $L$-functions more precisely, let $M \cong GL_m \times G_n$ be the Levi subgroup of a maximal standard $F$-parabolic subgroup $P=MN$ of $G_{m+n}$ and let $\pi' \otimes \pi$ be an irreducible admissible generic representation of $M$. The adjoint action $r$ of $\,^{L}{M}$, the $L$-group of $M$, on $\,^{L}\mathfrak{n}$, the Lie algebra of the $L$-group of $N$ decomposes as $r=r_2 \ \text{or} \ r_1 \oplus r_2$ \cite[Proposition 5.6]{Asgari:2002}, where
\noindent \[ \left\{ \begin{array}{cc}
 r_1= \rho_m \otimes \widetilde{R} \ \ \ \text{and} \ \ \ r_2=Sym^2 \otimes \mu^{-1} &  \text{if} \ \ \ G_n = GSpin_{2n+1} \\
  r_1= \rho_m \otimes \widetilde{R} \ \ \ \text{and} \ \ \ r_2=\wedge^2 \otimes \mu^{-1} &  \text{if} \ \ \ G_n = GSpin_{2n}
\end{array} \right. \]
\noindent Here $\rho_m$ is a standard representation of $GL_m(\mathbb{C})$, $\widetilde{R}$ is the contragredient of the standard representations of $\,^{L}{G}_n$ and $\mu$ is a similitude character of $\,^{L}{G}_n$. Using the existence of Whittaker model (generic representations), Shahidi defined so-called local coefficients and $\gamma$-functions. (See page 279 and Theorem 3.5 of \cite{Shahidi:1990a} for their definitions). In the case of $GSpin$ groups, the $\gamma$-functions that is attached to $\pi' \otimes\pi$ and $r_i$ for $i=1,2$ are denoted by $\gamma(s, \pi' \otimes \pi, r_i, \psi_{F})$, where $\psi_F$ is a fixed non trivial additive character of $F$ \cite[Theorem 3.5]{Shahidi:1990a}. Then, the $L$-functions from Langlands-Shahidi method that is attached to $\pi' \otimes \pi$ and $r_i$, denoted $L(s, \pi' \otimes \pi, r_i)$ for $i=1,2$, in \cite{Shahidi:1990a} are defined in the following way: When $\pi' \otimes \pi$ is tempered, Shahidi defined $L$-functions as an inverse of the normalized numerator of the $\gamma$-factors. Then, for an arbitrary irreducible admissible generic representation, he followed Langlands classification \cite{Silberger:1978} to define $L$-functions in general. The first $L$-function, i.e. $L(s, \pi' \otimes \pi, r_1)$, is the Rankin product $L$-function for $GL_m \times G_n$ which is denoted by $L(s, \pi' \times \widetilde{\pi})$ in \cite{Asgari_Shahidi:2006}, and the second $L$-function, i.e. $L(s, \pi' \otimes \pi, r_2)$, is either a twisted symmetric square $L$-function or a twisted exterior square $L$-function.

\begin{remark}\label{L:tempered}
In the tempered case, the equality of $\gamma$-factors implies the equality of $L$-functions since the $L$-functions are completely determined by $\gamma$-functions in the tempered case. For example, if we know the equality $\gamma(s, \pi_1' \otimes\pi_1, r, \psi_F) = \gamma (\pi_2' \otimes \pi_2, r, \psi_F)$ for tempered representations $\pi_1, \pi_2, \pi_1'$ and $\pi_2'$, we also have $L(s, \pi_1' \otimes \pi_1, r) = L(\pi_2' \otimes \pi_2, r)$.
\end{remark}

Let us recall structure theory for $GSpin$ groups which is studied by Asgari \cite{Asgari:2002}.

\begin{definition}\label{GSpin-def}
The split $GSpin$ groups $\textbf{G}_n:=\textbf{GSpin}_{2n+1}$ (resp. $\textbf{GSpin}_{2n}$) are split reductive algebraic groups of type $B_n$ (resp. $D_n$) whose derived subgroups are double coverings of split special orthogonal groups. Furthermore, the connected component of their Langlands dual groups are $\textbf{GSp}_{2n}(\mathbb{C})$ (resp. $\textbf{GSO}_{2n}(\mathbb{C})$).
\end{definition} 

\begin{prop}\label{GSpin-rootdatum}
The root datum $(X^*, R^*, X_*, R_*)$ of $\textbf{G}_n$ can be described as the following. 
$$X^*=\mathbb{Z}e_0 \oplus \mathbb{Z}e_1 \oplus \cdots \oplus \mathbb{Z}e_n,$$
$$X_*=\mathbb{Z}e_0^* \oplus \mathbb{Z}e_1^* \oplus \cdots \oplus \mathbb{Z}e_n^*.$$
(There is a standard $\mathbb{Z}$-pairing $< , >$ on $ X^* \times X_*$). 
And $R^*$ and $R_*$ are generated, respectively, by
$$\Delta^*= \{ \alpha_1 = e_1 - e_2, \alpha_2 = e_2 - e_3, \cdots, \alpha_{n-1}=e_{n-1}-e_n, \alpha_n=e_n \}, $$
$$\Delta_*= \{ \alpha_1^* = e_1^* - e_2^*, \alpha_2^* = e_2^* - e_3^*, \cdots, \alpha_{n-1}^*=e_{n-1}^*-e_n^*, \alpha_n^*=2e_n^*-e_0^* \}, $$
if $\textbf{G}_n:=\textbf{GSpin}_{2n+1}$ and by
$$\Delta^*= \{ \alpha_1 = e_1 - e_2, \alpha_2 = e_2 - e_3, \cdots, \alpha_{n-1}=e_{n-1}-e_n, \alpha_n=e_{n-1}+e_n \}, $$
$$\Delta_*= \{ \alpha_1^* = e_1^* - e_2^*, \alpha_2^* = e_2^* - e_3^*, \cdots, \alpha_{n-1}^*=e_{n-1}^*-e_n^*, \alpha_n^*=e_{n-1}^*+ e_n^*-e_0^* \}, $$
if $\textbf{G}_n:=\textbf{GSpin}_{2n}$.
\end{prop}

\begin{remark}\label{GSpin-dual}
The root datum of $\textbf{G}_n:=\textbf{GSpin}_{2n+1}$ (resp. $\textbf{GSpin}_{2n}$) is the dual root datum to the one for the group $\textbf{GSp}_{2n}$ (resp. $\textbf{GSO}_{2n}$). 
\end{remark}

\begin{prop}\label{GSpin-levi}
The standard Levi subgroups of $G_n$ are isomorphic to
$$GL_{n_1} \times GL_{n_2} \times \cdots \times GL_{n_k} \times G_{n-n'} $$
with $n' \leq n$ and $n-n' \neq 1$.
\end{prop}

\section{The equality of $L$-functions}
\label{Sec4}

To show the equality of $L$-functions (Theorem \ref{E:main:general}) we follow the filtration of admissible representations according to the growth properties of matrix coefficients:
\begin{align}\label{filtration}
\textit{supercuspidal } \subseteq \textit{ discrete series } \subseteq \textit{ tempered } \subseteq \textit{ admissible}.
\end{align}

\subsection{The supercuspidal case}
\label{Sec4.1}

In the supercuspidal case, we will briefly recall and sketch the proof in \cite{Heiermann_Kim:2017} to emphasize the difference with the case of discrete series representations. Let $\pi_{cusp}$ be an irreducible supercuspidal generic representation of $G_n$. The following is one main result in \cite{Heiermann_Kim:2017}:

\begin{thrm}\label{sc}
There exists a unique generic representation $\Pi_{cusp}$ of $GL_N$ such that for every irreducible supercuspidal representation $\rho$ of $GL_m$ we have
$$\gamma(s, \rho \times \pi_{cusp}, \psi_F) = \gamma(s, \rho \times \Pi_{cusp}, \psi_F) \ \ \text{and} \ \ L(s, \rho \times \pi_{cusp}) = L(s, \rho \times \Pi_{cusp}).$$
\end{thrm}

\begin{definition}\label{def:lift}
The representation $\Pi_{cusp}$ in Theorem \ref{sc} is called the local functorial lift of $\pi_{cusp}$.
\end{definition}

\begin{proof}
We will sketch the proof in \cite{Heiermann_Kim:2017} for completeness. Since $\pi_{cusp}$ is an irreducible supercuspidal generic representation of $G_n$, it can be embedded into a globally generic automorphic cuspidal representations $\underline{\pi}$ \cite[Proposition 5.1]{Shahidi:1990a}. Then, the global functional equation implies the following equality of $\gamma$-factors: 
$$\gamma(s, \rho \times \pi_{cusp}, \psi_F) = \gamma(s, \rho \times \Pi_{cusp}, \psi_F)$$
for every irreducible supercuspidal representation $\rho $ of $GL_m$.
The last step is to show that the functorial lift $\Pi_{cusp}$ of $\pi_{cusp}$ is also tempered (Remark \ref{F:main}). Then, the equality of $\gamma$-factors implies the equality of $L$-functions (Remark \ref{L:tempered}).
\end{proof}

\begin{remark}\label{F:main}
Let $\pi_{cusp}$ be an irreducible supercuspidal generic representation of the group $GSpin_{2n+1}$ $($resp. $GSpin_{2n})$. It is shown, in \cite{Heiermann_Kim:2017}, that its local functorial lift $\Pi_{cusp}$ is of the form
$$\Pi_{1} \times \cdots \times \Pi_{d}$$
where each $\Pi_{i}$ is an irreducible supercuspidal representation of some $GL_{2n_i}$ such that the twisted exterior square $L$-function $L(s, \Pi_{i} \otimes \omega_{\pi_{cusp}}, \wedge^2 \otimes \mu^{-1})$ $($resp. the twisted symmetric square $L$-function $L(s, \Pi_{i} \otimes \omega_{\pi_{cusp}} , Sym^2 \otimes \mu^{-1}))$ has a pole at $s=0$, $\widetilde{\Pi}_{i} \otimes (\omega_{\pi_{cusp}} \circ \operatorname{det}) \cong \Pi_{i}$ and $\Pi_{i} \ncong \Pi_{j}$ for $i \neq j$. In particular, $\Pi_{cusp}$ is a tempered representation. 
\end{remark}

\subsection{The general case}
\label{Sec4.2}

In a previous section, we state the results on the equality of $L$-functions in the case of supercuspidal representations (Theorem \ref{sc}). In this section, we generalize those results to the case of admissible representations. 

We first consider the case of discrete series generic representation, i.e., the second class in the filtration \eqref{filtration} of admissible representations. Let $\pi_{ds}$ be an irreducible discrete series generic representation of $G_n$. In this case, we use the following purely local result (a generic version of classification of (strongly positive) discrete series representations):

\begin{thrm}\label{classification:generic}
Let $\pi_{ds}$ be an irreducible discrete series generic representation of $G_n$. Then $\pi_{ds}$ can be embedded into the following induced representation:
\begin{align}\label{DS:SC:generic}
\pi_{ds} \hookrightarrow \delta_1 \times \cdots \times \delta_r \times \delta_1' \times \cdots \times \delta_k' \rtimes \pi_{cusp}.
\end{align}

where $\delta_i = \delta([\nu^{a_i} \rho_i, \nu^{b_i} \rho_i])$ are as in Theorem \ref{DS:SPDS} for $i=1, \ldots, r$, $\delta_l' =  \delta([\nu^{a_{\rho_l'}} \rho_l' ,\nu^{b^{(l)}} \rho_l']))$ are as in Theorem \ref{SPDS:SC} for $l=1, \ldots, k$, and $\pi_{cusp}$ is an irreducible supercuspidal generic representation of $G_{n'}$. Furthermore, each $a_{\rho_l'}$ is in $\{ \frac{1}{2}, 1 \}$ for $l = 1, \ldots, k$.
\end{thrm}

\begin{proof}
First, a combination of Theorem \ref{SPDS:SC} and Theorem \ref{DS:SPDS} implies the following embedding:
\begin{align}\label{embedding1}
\pi_{ds} \hookrightarrow \displaystyle\prod\limits_{i=1}^{r} \delta([\nu^{a_i} \rho_i, \nu^{b_i} \rho_i]) \times \displaystyle\prod\limits_{l=1}^{k} \displaystyle\prod\limits_{j=1}^{k_l} 
\delta([\nu^{a_{\rho_l'}-k_l +j} \rho_l', \nu^{b_j^{(l)}} \rho_l']) \rtimes \pi_{cusp}.
\end{align}

where $a_i, b_i$ and $\rho_i$ are as in Theorem \ref{DS:SPDS}, where $a_{\rho_l'}, b_j^{(l)}$ and $\rho_l'$ are as in Theorem \ref{SPDS:SC}, where $\pi_{cusp}$ is an irreducible supercuspidal generic representation of $G_n$. Let us recall that $a_{\rho_l'}$ is the reducibility point, i.e., $\nu^{s} \rho_l' \rtimes \pi_{cusp}$ is reducible if and only if $|s|=a_{\rho_l'}$. Due to the results of Shahidi \cite{Shahidi:1990a}, the reducibility point $a_{\rho_i'}$ in \eqref{embedding1} is in $\{ 0, \frac{1}{2}, 1 \}$, i.e., $(\rho_i', \pi_{cusp})$ satisfies $(C(0)), (C(\frac{1}{2})),$ or $(C(1))$. Since $a_{\rho_l'}$ is always positive, it is either $\frac{1}{2}$ or $1$. Therefore, $k_l = \lceil a_{\rho_l'}\rceil$, i.e., the smallest integer which is not smaller than $a_{\rho_l'}$, is $1$. In sum, we have the following embedding which completes the proof of the theorem:
$$\pi_{ds} \hookrightarrow \displaystyle\prod\limits_{i=1}^{r} \delta([\nu^{a_i} \rho_i, \nu^{b_i} \rho_i]) \times \displaystyle\prod\limits_{l=1}^{k}  
\delta([\nu^{a_{\rho_l'}} \rho_l', \nu^{b_1^{(l)}} \rho_l']) \rtimes \pi_{cusp}.$$
\end{proof}

Let $\Pi_{cusp}$ be the local functorial lift of $\pi_{cusp}$ as defined in Definition \ref{def:lift}. We consider the induced representation of $GL_N$ defined by
\begin{align}\label{eq:FL}
\Xi =\delta_1 \times \cdots \times \delta_r \times \delta_1' \times \cdots \times \delta_k' \times \Pi_{cusp} \times (\widetilde{\delta_k'} \otimes (\omega_{\pi_{ds}} \circ \operatorname{det}) ) \times \cdots \times (\widetilde{\delta_1} \otimes (\omega_{\pi_{ds}} \circ \operatorname{det})).
\end{align}
Then this induced representation has a unique generic constituent, denoted $\Pi_{ds}$ \cite{Zelevinsky:1980}.

Applying the multiplicativity of $\gamma$-factors in Langlands-Shahidi method \cite{Shahidi:1990b} to the embedding \eqref{DS:SC:generic} in Theorem \ref{classification:generic}, we have
$$\gamma(s, \rho \times \pi_{ds}, \psi_F) =  \gamma(s, \rho \times \pi_{cusp}, \psi_F) \ \times $$
$$\displaystyle\prod\limits_{i=1}^{r} \gamma(s , \rho \times \delta_i, \psi_F) \gamma(s, \rho \times (\widetilde{\delta_i}\otimes \omega_{\pi_{ds}}), \psi_F) \displaystyle\prod\limits_{l=1}^{k} \gamma(s , \rho \times \delta_l', \psi_F) \gamma(s, \rho \times (\widetilde{\delta_l'}\otimes \omega_{\pi_{ds}}), \psi_F).$$

In the general linear groups side, i.e., $\Pi_{ds}$, we can also apply the multiplicativity of $\gamma$-factors \cite{Jacquet_PS_Shalika:1983} to get the same expression of $\gamma(s, \rho \times \Pi_{ds}, \psi_F)$ as above. Then, Theorem \ref{sc} implies

$$\gamma(s, \rho \times \pi_{ds}, \psi_F) = \gamma(s, \rho \times \Pi_{ds}, \psi_F).$$

To obtain the equality of $L$-functions, our next step is to show that $\Pi_{ds}$ is a tempered representation as in the case of supercuspidal representations. However, as we explained in the introduction, the approach is of totally different nature. Here, we use a purely local result (Theorem \ref{classification:generic}) to show that $\Pi_{ds}$ is tempered. 

\begin{remark}
We apply the idea used in \cite{Cogdell_Kim_PS_Shahidi:2004} to the case of $GSpin$ groups. Let us briefly explain main differences between the classical group case and $GSpin$ case. First, the structure is different and more complicated. For example, we need to deal with essentially self-dual representations due to Weyl group action on the Levi subgroups (See Corollary 3.2 in \cite{YKim:2015} for more details). In other words, we need to control the central character. Second, the classification of discrete series of $GSpin$ groups is not fully known, while classical groups is fully known \cite{Moeglin_Tadic:2002}. (Note that Mati\'c and the author recently obtain a full classification of discrete series representations of odd $GSpin$ groups \cite{Kim_Matic:2017}). 
\end{remark}
\begin{prop}\label{ds} 
Let $\pi_{ds}$ be a discrete series generic representation of $G_n$ as in Theorem \ref{classification:generic}. Then $\pi_{ds}$ has a local functorial lift $\Pi_{ds}$ to $GL_N$ such that for every supercuspidal representation $\rho$ of $GL_m$ we have
$$L(s, \rho \times \pi_{ds})=L(s, \rho \times \Pi_{ds}) \ \ \ \ \text{ and } \ \ \ \  \gamma(s, \rho \times \pi_{ds}, \psi_F) =  \gamma(s, \rho \times \Pi_{ds}, \psi_F).$$
\end{prop}

\begin{proof} 
We explicitly compute $\Pi_{ds}$ using induction in stages as in \cite{Cogdell_Kim_PS_Shahidi:2004}. Let us first consider $\delta_i \otimes (\widetilde{\delta_i} \otimes (\omega_{\pi_{ds}} \circ \operatorname{det}))$ for $ 1 \leq i \leq r$, which is a supercuspidal support of $\Pi_{ds}$. Here, $\delta_i=\delta([\nu^{a_i}\rho_i, \nu^{b_i}\rho_i])$ is the unique irreducible subrepresentation of $\nu^{b_i}\rho_i \times \nu^{b_i-1}\rho_i \times \cdots \times \nu^{a_i}\rho_i$ such that $a_i \leq 0$ and $a_i+b_i \geq 0$ (Theorem \ref{classification:generic}). Then, replacing $\delta_i$ by its inducing data $\nu^{b_i}\rho_i \times \nu^{b_i-1}\rho_i \times \cdots \times \nu^{a_i}\rho_i$, $\delta_i \times (\widetilde{\delta_i} \otimes (\omega_{\pi_{ds}} \circ \operatorname{det}))$ becomes

$$\nu^{b_i}\rho_i \times \nu^{b_i-1}\rho_i \times \cdots \times \nu^{a_i}\rho_i \times (\nu^{-a_i}\widetilde{\rho_i} \otimes (\omega_{\pi_{ds}} \circ \operatorname{det}) )   \times \cdots \times (\nu^{-b_i}\widetilde{\rho_i} \otimes (\omega_{\pi_{ds}} \circ \operatorname{det})) $$
$$\cong \nu^{b_i}\rho_i \times \nu^{b_i-1}\rho_i \times \cdots \times \nu^{a_i}\rho_i \times \nu^{-a_i}\rho_i \times \nu^{-a_i-1}\rho_i \times \cdots \times \nu^{-b_i}\rho_i. $$
Furthermore, by rearranging the inducing data, we have
\begin{align}\label{constituent1}
\nu^{b_i} \rho_i \times \nu^{b_i-1} \rho_i \times \cdots \times \nu^{-b_i} \rho_i  \times \nu^{-a_i}\rho_i \times \nu^{-a_i-1}\rho_i \times \cdots \times \nu^{a_i}\rho_i.
\end{align}

The generic constituent of the induced representation \eqref{constituent1} is a tempered representation $\delta([\nu^{-b_i}\rho_i, \nu^{b_i}\rho_i]) \times \delta([\nu^{a_i}\rho_i, \nu^{-a_i}\rho_i])$. Therefore, when we compute the generic constituent of $\Xi$ in \eqref{eq:FL}, we replace each $\delta_i \otimes (\widetilde{\delta_i} \otimes (\omega_{\pi_{ds}} \circ \operatorname{det}))$ by $\delta([\nu^{-b_i}\rho_i, \nu^{b_i}\rho_i]) \otimes \delta([\nu^{a_i}\rho_i, \nu^{-a_i}\rho_i])$ for $1 \leq i \leq r$.

Next, we consider a supercuspidal support $\delta_l' \otimes (\widetilde{\delta_l'} \otimes (\omega_{\pi_{ds}} \circ \operatorname{det}))$ for $ 1 \leq l \leq k$. In this case, we need to split it into two cases depending on the associated exponent $a_{\rho_l'}$. First, let us consider the case when $a_{\rho_l'}$ is $\frac{1}{2}$, i.e., when $(\rho_l', \pi_{cusp})$ satisfies $C(\frac{1}{2})$. Here, $\delta_l' = \delta([\nu^{\frac{1}{2}}\rho_l', \nu^{b^{(l)}}\rho_l'])$ is the unique irreducible subrepresentation of $\nu^{b^{(l)}}\rho_l' \times \nu^{b^{(l)}-1}\rho_l' \times \cdots \times \nu^{\frac{1}{2}}\rho_l'$ (Theorem \ref{classification:generic}). As in the previous case, replacing $\delta_l'$ by its inducing data, $\delta_l' \times (\widetilde{\delta_l'} \otimes (\omega_{\pi_{ds}} \circ \operatorname{det}))$ becomes 
$$\nu^{b^{(l)}} \rho_l' \times \cdots \times \nu^{\frac{3}{2}}\rho_l' \times \nu^{\frac{1}{2}}\rho_l' \times \nu^{-\frac{1}{2}}\rho_l' \times \nu^{-\frac{3}{2}}\rho_l' \times \cdots \times \rho_l' \nu^{-b^{(l)}}. $$

Therefore, we replace $\delta_l' \otimes (\widetilde{\delta_l'} \otimes (\omega_{\pi_{ds}} \circ \operatorname{det}) )$ by $\delta([\nu^{-b^{(l)}}\rho_l', \nu^{b^{(l)}}\rho_l'])$ in the inducing data when we compute the generic constituent of $\Xi$.

Finally, let us consider the case when $a_{\rho_l'}$ is $1$, i.e., when $(\rho_l', \pi_{cusp})$ satisfies $(C(1))$. In this case, $\delta_l' = \delta([\nu^1\rho_l', \nu^{b^{(l)}}\rho_l'])$. As in the previous case, $\delta_l' \times (\widetilde{\delta_l'} \otimes (\omega_{\pi_{ds}} \circ \operatorname{det}))$ is a constituent of the larger induced representation
\begin{align}\label{C(1)case}
\nu^{b^{(l)}}\rho_l' \times \cdots \times \nu^{2}\rho_l' \times \nu^{1}\rho_l' \times \nu^{-1}\rho_l' \times \nu^{-2}\rho_l' \times \cdots \times \nu^{-b^{(l)}}\rho_l'.
\end{align}
(Note that $\nu^{0}\rho_l'$ is missing between $\nu^{1}\rho_l'$ and $\nu^{-1}\rho_l'$). 

Not as in the previous case, we cannot replace $\delta_l' \times (\widetilde{\delta_l'} \otimes (\omega_{\pi_{ds}} \circ \operatorname{det}))$ by discrete series representation since $\nu^{0}\rho_l'$ is missing in the inducing data of the induced representation \eqref{C(1)case}. To overcome this situation, we borrow such $\rho_l'$ from $\Pi_{cusp}$ as follows:
By Remark \ref{F:main}, we know that $\Pi_{cusp}$ (the local functorial lift of $\pi_{cusp}$) is of the form
$$\Pi_{1} \times \cdots \times \Pi_{d}$$
with each $\Pi_{i}$ a supercuspidal representation of an appropriate general linear group $GL_{d_i}$ and the $\Pi_{i}$ distinct. Therefore, the set $\{ \Pi_1, \cdots, \Pi_d \}$ are precisely the set of supercuspidal representation $\rho$ for which $L(s, \widetilde{\rho} \times \Pi_{cusp})$ has a pole at $s=0$. On the other hand, Theorem 8.1 of \cite{Shahidi:1990a} implies that if $(\rho_l', \pi_{cusp})$ satisfies $(C(1))$, then $L(s, \widetilde{\rho_l'} \times \pi_{cusp}) = L(s, \widetilde{\rho_l'} \times \Pi_{cusp})$ has a pole at $s=0$. Hence, if we let $J$ be the set of $m \in \{1, 2, \ldots, k \}$ such that $a_{\rho_m'}=1$, then $\rho_m' \in \{ \Pi_{1}, \ldots, \Pi_{d} \}$ for $m \in J$. Let us write 
$$\Pi_{cusp}= \displaystyle\prod\limits_{m \in J} \rho_m' \times \displaystyle\prod\limits_{n \notin J} \Pi_n.$$ 
Now, for each $m \in J$, let us consider $\delta_m' \otimes \rho_m' \otimes (\widetilde{\delta_m'} \otimes (\omega_{\pi_{ds}} \circ \operatorname{det}))$. If we replace $\delta_m'$ by its inducing data, we see that $\delta_m' \times \rho_m' \times (\widetilde{\delta_m'} \otimes (\omega_{\pi_{ds}} \circ \operatorname{det}) ))$ is a constituent of the larger induced representation
$$\nu^{b^{(m)}}\rho_m' \times \cdots \times \nu\rho_m' \times \rho_m' \times \nu^{-1}\rho_m' \times \cdots \times \nu^{-b^{(m)}}\rho_m'$$
and this representation has as its unique generic constituent the discrete series representation $\delta([\nu^{-b^{(m)}}\rho_m', \nu^{b^{(m)}}\rho_m'])$. Therefore, in the inducing data $\Xi$, we replace $\delta_m' \otimes \rho_m' \otimes (\widetilde{\delta_m'} \otimes (\omega_{\pi_{ds}} \circ \operatorname{det}) )$ by $\delta([\nu^{-b^{(m)}}\rho_m', \nu^{b^{(m)}}\rho_m'])$.

In sum, we conclude that the lift $\Pi_{ds}$ becomes the unique generic constituent of the induced representation 
$$\delta([\nu^{-b_1}\rho_1, \nu^{b_1}\rho_1]) \times \delta([\nu^{a_1}\rho_1, \nu^{-a_1}\rho_1]) \times \cdots \times \delta([\nu^{-b_r}\rho_r, \nu^{b_r}\rho_r]) \times \delta([\nu^{a_r}\rho_r, \nu^{-a_r}\rho_r])$$ 
$$\times \delta([\nu^{-b^{(1)}}\rho_1', \nu^{b^{(1)}}\rho_1']) \times \cdots \times \delta([\nu^{-b^{(k)}}\rho_k', \nu^{b^{(k)}}\rho_k']) \times \displaystyle\prod\limits_{n \notin J} \Pi_n.$$ 

This full induced representation is irreducible since its inducing data\\ $\delta([\nu^{-b_1}\rho_1, \nu^{b_1}\rho_1]) \otimes \delta([\nu^{a_1}\rho_1, \nu^{-a_1}\rho_1]) \otimes \cdots \otimes \delta([\nu^{-b_r}\rho_r, \nu^{b_r}\rho_r]) \otimes \delta([\nu^{a_r}\rho_r, \nu^{-a_r}\rho_r])$ $\otimes \delta([\nu^{-b^{(1)}}\rho_1', \nu^{b^{(1)}}\rho_1']) \otimes$ $\cdots \otimes \delta([\nu^{-b^{(k)}}\rho_1', \nu^{b^{(k)}}\rho_1']) \otimes \bigotimes\limits_{n \notin J} \Pi_n$ is unitary and irreducible. 

Therefore, $\Pi_{ds}$ is exactly this full induced representation. Furthermore, $\Pi_{ds}$ is tempered since its inducing data is a unitary discrete series representation. Therefore, we also have the following equality of $L$-functions (Remark \ref{L:tempered}): 
$$L(s, \rho \times \pi_{ds})=L(s, \rho \times \Pi_{ds})$$
for any supercuspidal representation $\rho$ of $GL$. 
\end{proof}

Now, let us consider the case when $\pi_t$ is a tempered generic representation of the group $G_n$, i.e., the third class in the filtration \eqref{filtration}. 

\begin{prop}\label{tempered}
Let $\pi_t$ be a tempered generic representation of $G_n$. There exists a local functorial lift $\Pi_t$ to $GL_N$ such that for every supercuspidal representation $\rho$ of $GL_m$ we have
$$L(s, \rho \times \pi_t)=L(s, \rho \times \Pi_t) \text{ and } \gamma(s, \rho \times \pi_t, \psi_F) =  \gamma(s, \rho \times \Pi_t, \psi_F).$$

Furthermore, the lift $\Pi_t$ is irreducible, tempered and generic. 
\end{prop}

\begin{proof}

We can view $\pi_t$ as a direct summand of an induced representation $\delta_{1} \times \cdots \times \delta_{m} \times \pi_{ds}$ with each $\delta_{i}$ is discrete series representation of some $GL$ for $i=1, \ldots , m$ and $\pi_{ds}$ is a discrete series generic representation of $G_{n_0}$ for some $n_0 < n$.

We consider the following lift:
$$\Pi_t:=\delta_{1} \times \cdots \times \delta_{m} \times \Pi_{ds} \times (\widetilde{\delta_{m}} \otimes (\omega_{\pi_t} \circ \operatorname{det}) ) \times \cdots \times (\widetilde{\delta_{1}} \otimes (\omega_{\pi_t} \circ \operatorname{det}) )$$
where $\Pi_{ds}$ is the local functorial lift of $\pi_{ds}$ as in Proposition \ref{ds}. This representation is irreducible since its inducing data is unitary and irreducible. Furthermore, $\Pi_t$ is tempered and generic since its inducing data is a discrete series representation. Using the multiplicativity of $\gamma$-factors and the definition of $L$-functions in the tempered case, we can show as in the proof of Proposition \ref{ds} the following equalities of local factors:
$$L(s, \rho \times \pi_t)=L(s, \rho \times \Pi_t) \text{ and } \gamma(s, \rho \times \pi_t, \psi_F) =  \gamma(s, \rho \times \Pi_t, \psi_F)$$
for any supercuspidal representation $\rho$ of $GL$.
\end{proof}

\begin{cor}
In Proposition \ref{tempered}, the local functorial lift of $\pi_t$ is unique.
\end{cor}
\begin{proof}
Let $\Pi_1$ and $\Pi_2$ be two local functorial lifts of $\pi_t$. We have
$$\gamma(s, \rho \times \Pi_1, \psi_F) =  \gamma(s, \rho \times \pi_t, \psi_F)=\gamma(s, \rho \times \Pi_2, \psi_F)$$
for any irreducible supercuspidal representation $\rho$ of $GL$.
Then, the local converse theorem for $GL$ (Theorem 1.1 in \cite{Henniart:1993}) implies  
$\Pi_1 \cong \Pi_2.$
\end{proof}
Finally, let us consider the case when $\pi$ is an irreducible admissible generic representation of $G_n$.

\begin{thrm}\label{E:main}
Let $\pi$ be an irreducible admissible generic representation of $G_n$. 
There exists a local functorial lift $\Pi$ to $GL_N$ such that for every supercuspidal representation $\rho$ of $GL_m$ we have
$$L(s, \rho \times \pi)=L(s,\rho \times \Pi) \text{ and } \gamma(s, \rho \times \pi, \psi_F) =  \gamma(s, \rho \times \Pi, \psi_F).$$
\end{thrm}

\begin{proof}
Due to the standard module conjecture for $GSpin$ groups \cite{Heiermann_Muic:2007, WKim:2009}, we can view $\pi$ as a full induced representation of the following form:
$$\tau_{1} \nu^{r_1} \times \cdots \times \tau_{m}\nu^{r_m} \rtimes \pi_{t}$$
where each $\tau_{i}$ is a tempered representation of an appropriate $GL_{n_i}$, $0 < r_m < \cdots < r_1$, and $\pi_{t}$ is a tempered generic representation of some ${G}_{n_0}$ of the same type.

We consider the following lift:
$$\tau_{1} \nu^{r_1} \times \cdots \times \tau_{m}\nu^{r_m} \times \Pi_{t} \times (\widetilde{\tau_{m}} \otimes (\omega_{\pi_t} \circ \operatorname{det}) \nu^{-r_m}) \times \cdots \times (\widetilde{\tau_{1}} \otimes (\omega_{\pi_t} \circ \operatorname{det})\nu^{-r_1})$$
where $\Pi_{t}$ is the local functorial lift of $\pi_{t}$ as in Proposition \ref{tempered}. 

This induced representation has a unique irreducible quotient, i.e., Langlands quotient which we denote by $\Pi$.

We show the equality of $L$- and $\gamma$-factors.

On the general linear group side, by the multiplicativity of $\gamma$- and $L$-factors,
$$\gamma(s, \rho \times \Pi, \psi_F)=\gamma(s, \rho \times \Pi_{t}, \psi_F) \ \times $$
$$\displaystyle\prod\limits_{j=1}^m \gamma(s + r_j, \rho \times \tau_{j}, \psi_F)\gamma(s - r_j, \rho \times (\widetilde{\tau_{j}} \otimes (\omega_{\pi_t} \circ \operatorname{det})) \times \rho, \psi_F)$$

and
$$L(s, \rho \times \Pi)=L(s, \rho \times \Pi_{t}) \displaystyle\prod\limits_{j=1}^m L(s + r_j, \rho \times \tau_{j})L(s - r_j, \rho \times (\widetilde{\tau_{j}} \otimes (\omega_{\pi_t} \circ \operatorname{det}))).$$

On the $GSpin$ group side, by the multiplicativity of $\gamma$-factors,

$$\gamma(s, \rho \times \pi, \psi_F)=\gamma(s, \rho \times \pi_{t}, \psi_F) $$
$$\displaystyle\prod\limits_{j=1}^m \gamma(s + r_j, \rho \times \tau_{j}, \psi_F)\gamma(s - r_j, \rho \times (\widetilde{\tau_{j}} \otimes (\omega_{\pi_t} \circ \operatorname{det})), \psi_F).$$

By the Langlands classification, $L(s, \rho \times \pi)$ ($L$-function from Langlands-Shahidi method) is defined by 
$$L(s, \rho \times \pi_{t}) \displaystyle\prod\limits_{j=1}^m L(s + r_j, \rho \times \tau_{j})L(s - r_j, \rho \times (\widetilde{\tau_{j}} \otimes (\omega_{\pi_t} \circ \operatorname{det}))).$$

Therefore, this reduces us to show the following equalities (the case of tempered representations):
$$L(s, \rho \times \pi_{t})= L(s, \rho \times \Pi_{t}) \text{ and } \gamma(s, \rho \times \pi_{t}, \psi_F)= \gamma(s, \rho \times \Pi_{t}, \psi_F)$$

Then, theorem follows since this is exactly the case in Proposition \ref{tempered}.
\end{proof}

\begin{thrm}\label{E:main:general}
Let $\pi$ and $\Pi$ be as in Theorem \ref{E:main}. Then, for every irreducible admissible generic representation $\pi'$ of $GL_m$ we have
$$L(s,  \pi' \times \pi)=L(s, \pi' \times \Pi) \text{ and } \gamma(s, \pi' \times \pi, \psi_F) =  \gamma(s, \pi' \times \Pi, \psi_F).$$
\end{thrm}

\begin{proof}
Using the same argument as in the proof of Theorem \ref{E:main}, i.e., the standard module conjecture and the Langlands classification it is enough to show the theorem when $\pi$, $\Pi$ and $\pi'$ are tempered representations. We write $\pi'$ as 
$$\pi'=\delta_{1}' \times \cdots \times \delta_{k}'.$$
with each $\delta_{i}'$ discrete series for $i=1, \ldots, k$. Furthermore, each $\delta_{i}'$ can be considered as the unique generic subrepresentation of the induced representation
$$\nu^{t} \rho' \times \nu^{t-1}\rho' \times \cdots \times \nu^{-t}\rho'.$$
with $\rho'$ supercuspidal representation and $t$ a non-negative half integer.

Using the multiplicativity of $\gamma$-factors and Proposition \ref{tempered}, we have the equality of $\gamma$-factors. Furthermore, the equality of $L$-functions automatically follows since the representations $\pi$, $\Pi$ and $\pi'$ are tempered.
\end{proof}

\begin{remark}
In \cite{Heiermann_Kim:2017}, Heiermann and the author proved Theorem \ref{E:main:general} with different approach following Heiemrann's construction of Langlands parameters \cite{Heiermann:2004, Heiermann:2006, Heiermann:2006b}. In the present paper, we give another proof of Theorem \ref{E:main:general} using classification results Section \ref{Sec3.1} or \cite{YKim:2015, YKim:2016}. However, the classification results behaves an important role when we strengthen the the generic Arthur packet conjecture (see Remark \ref{Heiermann and Kim} for more details).
\end{remark}

\section{The generic $L$-packets and the generic Arthur packet conjecture}
\label{Sec5}

We first state an application of our main results in Section \ref{Sec4}, weak version of the generic Arthur packet conjecture for $GSpin$ groups. And to remove the assumption of the conjecture further, we define and study $L$-packets in Section \ref{Sec5.2}. Furthermore, using the properties of our $L$-packets that are obtained in \ref{Sec5.2}, we strengthen and prove the conjecture, called strong version of the generic Arthur packet conjecture for $GSpin$ groups.

\subsection{A weak version of the generic Arthur packet conjecture}
\label{Sec5.1}
For a connected reductive group ${\bf G}$ defined over non-archimedean local field $F$ of characteristic zero, let $L_{F}$ be the Weil-Deligne group of $F$, denoted $W_{F}'$, and let $\Phi(G)$ be the set of Langlands parameters, i.e., equivalence classes of homomorphisms
$$\phi: L_{F} \rightarrow \,^{L}G=\hat{G} \rtimes L_{F}$$
under conjugation by elements in $\hat{G}$, satisfying some several conditions (see \cite{Shahidi:2011}). We also define $\Phi_{temp}(G) \subset \Phi(G)$ as the set of $\phi$ such that the image $\phi(W_F)$ of $W_F$ in $\hat{G}$ is bounded. An element in $\Phi_{temp}(G)$ is called a tempered Langlands parameter. The Arthur parameter is defined by the following way:
Let $\Psi(G)$ be the set of $\hat{G}$-orbits of maps
$$\psi: L_{F} \times SL_2(\mathbb{C}) \rightarrow \,^{L}G=\hat{G} \rtimes L_{F}$$
such that $\psi|L_{F} \in \Phi_{temp}(G)$. 

For each $\psi \in \Psi(G)$, we attach the Langlands parameter $\phi_{\psi} \in \Phi(G)$ by

\begin{equation}\label{Langlands parameter that corr. to Arthur}
\phi_{\psi}(w)=\psi(w, \begin{pmatrix} |w|^{\frac{1}{2}} & 0 \\ 0 & |w|^{-\frac{1}{2}} \end{pmatrix}).
\end{equation}

In \cite{Shahidi:2011}, Shahidi proved the following theorem:

\begin{thrm}\label{ConjG}
For a connected reductive group ${\bf G}$ defined over $F$, we assume that there exists an $L$-packet $\Pi(\phi_{\psi})$ that corresponds to the Langlands parameter $\phi_{\psi}$, where $\phi_{\psi}$ corresponds to the Arthur parameter $\psi \in \Psi(G)$ as in (\ref{Langlands parameter that corr. to Arthur}). 
We further assume that the $L$-functions from Langlands-Shahidi method for a connected reductive group ${\bf G}$ are equal to Artin $L$-functions. If $\Pi(\phi_{\psi})$ has a generic member, then the Langlands parameter $\phi_{\psi}$ is tempered.\\
\end{thrm}

Now, we introduce a weak version of the generic Arthur packet conjecture.

\begin{conjecture}\label{Weak G}
Let $\psi, \phi_{\psi},$ and $\Pi(\phi_{\psi})$ be as in Theorem \ref{ConjG}. Assume that $\Pi(\phi_{\psi})$ has a generic member. Then, the Langlands parameter $\phi_{\psi}$ is tempered.
\end{conjecture}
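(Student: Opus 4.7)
The plan is to deduce Conjecture \ref{Weak G} in the $GSpin$ case directly from Shahidi's theorem (Theorem \ref{ConjG}) by verifying its hypothesis, namely that \emph{both} families of Langlands–Shahidi $L$-functions attached to $GSpin_{2n+1}$ and $GSpin_{2n}$ coincide with Artin $L$-functions through the local Langlands correspondence. Recall from Section \ref{Sec3.2} that, for a maximal Levi $M\cong GL_m\times G_n$ of $G_{m+n}$, the adjoint representation decomposes as $r=r_1$ or $r=r_1\oplus r_2$, where $r_1$ gives the Rankin product $L$-function $L(s,\pi'\times\pi)$ for $GL_m\times G_n$, and $r_2$ gives a twisted symmetric (resp.\ exterior) square $L$-function of $GL_m$ in the odd (resp.\ even) case.

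First I would dispose of the $r_2$-contribution: the twisted symmetric and exterior square $L$-functions on $GL_m$ are known to be Artin $L$-functions by Henniart's work \cite{Henniart:2010}, together with the local Langlands correspondence for $GL$ \cite{Harris_Taylor:2001,Henniart:2000} which identifies representations with their Langlands parameters. Thus for the $r_2$ piece there is nothing further to check.

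Next, for the $r_1$-contribution, I would invoke our main result Theorem \ref{E:main:general}: for every irreducible admissible generic representation $\pi$ of $G_n$ there is a functorial lift $\Pi$ to a general linear group with
\[
L(s,\pi'\times\pi)=L(s,\pi'\times\Pi),\qquad \gamma(s,\pi'\times\pi,\psi_F)=\gamma(s,\pi'\times\Pi,\psi_F),
\]
for every irreducible admissible generic $\pi'$ on $GL_m$. The right-hand side is a Rankin–Selberg $L$-function of \cite{Jacquet_PS_Shalika:1983}, which is an Artin $L$-function via the local Langlands correspondence for $GL$. Chasing these identifications through the parametrization $\pi\mapsto \phi_\pi$ constructed in Theorem \ref{HeK} shows that the Rankin product Langlands–Shahidi $L$-function equals $L(s,\phi_{\pi'}\otimes\phi_\pi)$, i.e.\ an Artin $L$-function.

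With both hypotheses in place, the conclusion is immediate: applying Theorem \ref{ConjG} to $G=G_n=GSpin_{2n+1}$ or $GSpin_{2n}$ shows that whenever the $L$-packet $\Pi(\phi_\psi)$ attached to an Arthur parameter $\psi\in\Psi(G)$ contains a generic member, the parameter $\phi_\psi$ is tempered. The only real technical content sits in the verification of the hypothesis for the $r_1$ piece, and this has already been carried out in Section \ref{Sec4}; the present argument is simply the bookkeeping that feeds Theorem \ref{E:main:general} and Henniart's result into Shahidi's criterion. No serious obstacle remains, since all required ingredients (the equality of $L$-functions, the identification of twisted square $L$-factors as Artin factors, and the local Langlands correspondence for $GL$) are either established in this paper or cited from the literature.
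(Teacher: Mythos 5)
Your proposal is correct and is essentially identical to the paper's own argument: the paper also disposes of the $r_2$ piece via Henniart's result on twisted symmetric/exterior square $L$-functions, handles the $r_1$ piece via Theorem \ref{E:main:general} together with the local Langlands correspondence for $GL$, and then invokes Shahidi's criterion (Theorem \ref{ConjG}) to conclude Theorem \ref{ConjG:GSpin}. No meaningful difference in route or emphasis.
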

Our main Theorem, i.e., Theorem \ref{E:main:general}, implies that we can remove the assumption on the equality of $L$-functions of Theorem \ref{ConjG} in the case of $GSpin$ groups. (Note that Henniart \cite{Henniart:2010} prove that the second $L$-functions in the $GSpin$ case, i.e., twisted symmetric square or twisted exterior square $L$-functions are equal to Artin $L$-functions).  In other words, we prove a weak version of the generic Arthur packet conjecture (Conjecture \ref{Conjecture A} in Introduction) in the case of $GSpin$ groups.
\begin{thrm}\label{ConjG:GSpin}
Assume that there exists an $L$-packet $\Pi(\phi_{\psi})$ that corresponds to the Langlands parameter $\phi_{\psi}$, where $\phi_{\psi}$ corresponds to the Arthur parameter $\psi$ for $GSpin$ groups. If $\Pi(\phi_{\psi})$ has a generic member, then $\phi_{\psi}$ is a tempered Langlands parameter.
\end{thrm}

\begin{remark}
In the case of classical groups, the Conjecture \ref{Weak G} is also studied and proved by Ban, Jantzen-Liu, and Liu \cite{Ban:2006, Jantzen_Liu:2014, Liu:2011}.
\end{remark}

\subsection{Construction of generic $L$-packets}
\label{Sec5.2}

Heiermann and the author constructed Langlands parameters that correspond to irreducible admissible generic representations of $GSpin$ (the generic local Langlands correspondence). More precisely, the following is one main result in \cite{Heiermann_Kim:2017}: 

\begin{thrm}\label{HeK}
Let $\pi$ be an irreducible admissible generic represetnation of $GSpin$. Then there exists a Langlands parameter $\phi_{\pi}$ such that for any irreducible admissible generic representation $\rho$ of $GL$, we have
$$L(s, \rho \times \pi) = L(s, \phi_{\rho} \otimes \phi_{\pi}) \textit{\ \ and \ \ } \gamma(s, \rho \times \pi, \psi_F) = \gamma(s, \phi_{\rho} \otimes \phi_{\pi}, \psi_F)$$
where $\phi_{\rho}$ is the Langlands parameter that corresponds to $\rho$ through the local Langlands correspondence for $GL$ \cite{Harris_Taylor:2001, Henniart:2000}.
\end{thrm}

The next natural question in the Langlands program is whether we can describe the set of irreducible admissible representations that correspond to the Langlands parameter $\phi_{\pi}$, i.e., the $L$-packet, where $\phi_{\pi}$ is the Langlands parameter that is constructed in Theorem \ref{HeK}. To answer the question, we first describe all generic representations in $L$-packets in terms of $L$-functions from Langlands-Shahidi method. Using the results in \cite{Heiermann_Kim:2017}, we derive the following theorem:

\begin{thrm}\label{generic reps in $L$-packet}
Let $\pi_1$ (resp. $\pi_2$) be an irreducible admissible generic representation of $GSpin$ and let $\phi_{\pi_1}$ (resp. $\phi_{\pi_2}$) be its Langlands parameter that is constructed in \cite{Heiermann_Kim:2017}. Then, $\phi_{\pi_1} \cong \phi_{\pi_2}$ if and only if for each irreducible admissible generic representation $\rho$ of $GL$ we have
$$L(s, \rho \times \pi_1) = L(s, \rho \times \pi_2) \textit{\ and \ } \gamma(s, \rho \times \pi_1, \psi_F) = \gamma(s, \rho \times \pi_2, \psi_F). $$ 
\end{thrm}

\begin{proof}
Theorem 4.9 of \cite{Heiermann_Kim:2017} implies the necessary condition. 
Now let us assume that $\pi_1$ and $\pi_2$ share the same local factors. Let $\Pi_i$ be the local functorial lift of $\pi_i$ for $i=1,2$. Then, for any irreducible admissible generic representation $\rho$ of $GL$ we have
$$\gamma(s, \rho \times \Pi_1, \psi_F) = \gamma(s, \rho \times \pi_1, \psi_F)=\gamma(s, \rho \times \pi_2, \psi_F) = \gamma(s, \rho \times \Pi_2, \psi_F).$$ 
Then, the local converse theorem for $GL$ \cite{Henniart:1993} implies that $\Pi_1 \cong \Pi_2$. In \cite[Theorems 4.8 and 4.9]{Heiermann_Kim:2017}, recall that the Langlands parameter of $\pi_i$ is constructed by the Langlands parameter of its local functorial lifts $\Pi_i$ for $i=1,2$. Therefore, we have $\phi_{\pi_1} \cong \phi_{\pi_2}.$
\end{proof}

Next, we describe all non-generic representations in $L$-packets. Following Theorems \ref{HeK} and \ref{generic reps in $L$-packet}, we define $L$-packets as follows:

\begin{definition}[Generic $L$-packets]\label{Def:generic L-packet}
For any fixed irreducible admissible generic representation $\pi$ of $GSpin$, an $L$-packet that contains $\pi$ is defined as the set of irreducible admissible representations $\pi'$ such that for each irreducible admissible representation $\rho$ of $GL$ we have
$$L(s, \rho \times \pi') = L(s, \rho \times \pi) \textit{\ and \ } \gamma(s, \rho \times \pi', \psi_F) = \gamma(s, \rho \times \pi, \psi_F). $$ 
We call such $L$-packets as generic $L$-packets.
\end{definition}

\begin{remark}\label{non-generic}
Definition of a generic $L$-packet is not well defined when it contains a non-generic representation since local factors are not defined in general. 
\end{remark}

To overcome the situation in Remark \ref{non-generic}, we need to assume the existence of Rankin product $L$-functions that correspond to non-generic representations. To establish the strong version of the generic Arthur packet conjecture, we assume that those new $L$-functions (Rankin product $L$-functions) that correspond to any admissible representations satisfy the following property:

\begin{assumption}\label{Assumption:LS}
The new $L$-functions that correspond to any admissible representations follow the Langlands classification.  
\end{assumption}

\begin{remark}\label{Harris}

\begin{enumerate}[(i)]
\item The property of $L$-functions in Assumption \ref{Assumption:LS} is very natural since the Langlands classification is well-constructed for any connected reductive groups \cite{Silberger:1978}. (It is also true that all $L$-functions that have been defined (including $L$-functions from Langlands-Shahidi) follow the Langlands classification).
\item $L$-packets for $GSpin$ groups is described by M$\oe$glin in \cite{Moeglin:2014} using the trace formula. Therefore, our result is unconditional if we use M$\oe$glin's result. However, our goal is to define and study $L$-packets using Langlands-Shahidi method, i.e., without using trace formula. Therefore, we keep writing the property (Assumption \ref{Assumption:LS}) as assumption.
\item One important property of $L$-packets that we need to show is that $L$-packets are not empty sets. More precisely, starting from Langlands parameters that satisfy several conditions, we need to show that there exists an irreducible generic admissible representation of $GSpin$ groups that corresponds to the Langlands parameters. This problem is directly related to the local descent method for $GSpin$ groups. This question was raised and commented by M. Harris and M. Asgari while the author was giving a talk at the 2016 Paul J. Sally, Jr. Midwest Representation Theory Conference in honor of the 70th Birthday of Philip Kutzko. Currently Lau, and Kaplan-Lau-Liu are working on the local descent method for odd and even $GSpin$ groups, respectively.
\end{enumerate}
\end{remark}

\begin{definition}
An $L$-packet $\Pi$ is called a tempered $L$-packet if all its members are tempered representations.
\end{definition}

We prove the following property for generic $L$-packets in the case of $GSpin$ groups (note that the following lemma is one main tool when we prove the strong version of the generic Arthur packet conjecture, Theorem \ref{StrongConjG}): 

\begin{lem}\label{tempered $L$-packet}
If an L-packet $\Pi_{\phi}$ for $GSpin$ contains a tempered representation which is also generic, all other members in $\Pi_{\phi}$ are tempered as well, i.e., $\Pi_{\phi}$ is a tempered $L$-packet.
\end{lem}

\begin{proof}
Let $\pi$ be an irreducible tempered generic representation of $G_n$ in $\Pi_{\phi}$. Suppose that there exists a non-tempered representation $\pi' \in \Pi_{\phi}$. Since $\pi'$ and $\pi$ are in the same $L$-packet, we have
\begin{align}\label{non-tempered in L-packet}
L(s, \rho \times \pi') = L(s, \rho \times \pi)
\end{align}
for any irreducible admissible representation $\rho$ of $GL$. 

The Langlands classification implies that $\pi'$ can be considered to be the unique quotient of the following induced representation:
$$\nu^{r_1} \tau_1' \times \cdots \times \nu^{r_m} \tau_m' \rtimes\pi_t'$$
where $\tau_i'$ is a tempered representation of $GL$ for each $i=1, \cdots, m$ with $r_1 > r_2 > \cdots > r_m>0$, and $\pi_t'$ is a tempered representation of $G_{n'}'$ ($G_{n'}'$ and $G_n$ are the same type of groups). Since $\pi'$ is non-tempered, $m \geq 1$.

Langlands classification (Assumption \ref{Assumption:LS}) implies that for any admissible representation $\rho$ we have
$$L(s, \rho \times \pi) = L(s, \rho \times \pi') = L(s, \rho \times \pi_t') \displaystyle\prod\limits_{i=1}^{m} L(s+r_i, \rho \times \tau_i')L(s-r_i, \rho \times \widetilde{\tau_i'} \otimes (\omega_{\pi'} \circ det)).$$
If we let $\rho$ be contragredient of $\widetilde{\tau_i'} \otimes (\omega_{\pi'} \circ det)$, then $L(s - r_j, \rho \times \widetilde{\tau_i'} \otimes (\omega_{\pi'} \circ det))$ has a pole at $s= r_j>0$. This also implies that $L(s, \rho \times \pi)=L(s, \rho \times \pi')$ also has a pole at $s=r_j>0$ when  $\rho$ is contragredient of $\widetilde{\tau_i'} \otimes (\omega_{\pi'} \circ det)$. This contradicts that $L(s, \rho \times \pi)$ is holomorphic for $Re(s) >0$ since both $\rho$ and $\pi$ are tempered and generic (the tempered $L$-function conjecture \cite{Heiermann_Opdam:2013}). Therefore, $m=0$. In other words, $\pi'$ is a tempered representation as well. We conclude that all members in $\Pi_{\phi}$ are tempered.
\end{proof}

\subsection{A strong version of the generic Arthur packet conjecture}
\label{Sec5.3}
We first prove one main lemma that proves that temperedness is preserved through the local Langlands functoriality from $GSpin$ groups to $GL$ groups. Recall that we, in Proposition \ref{tempered}, showed that the local functorial lift of a tempered generic representation of $GSpin$ groups is a tempered representation of $GL$ as well. The next natural question is whether a non-tempered representation of $GSpin$ can be lifted to a tempered representation of $GL$. The following lemma answers this question in the case of $GSpin$ groups:

\begin{lem}\label{TT}
Let $\pi$ be an irreducible admissible generic representation of $GSpin$. Then, $\pi$ is a tempered representation of $GSpin$ if and only if its local functorial lift $\Pi$ is a tempered representation of $GL$. 
\end{lem}

\begin{proof}
Proposition \ref{tempered} implies the sufficient condition. Now, suppose that $\pi$ is a non-tempered admissible generic representation of $G_n$ and its local functorial lift $\Pi$ is a tempered representation of $GL$. Then due to \cite{Heiermann_Muic:2007}, there exists $r_1 > r_2 > \cdots > r_m>0$ such that $\pi$ can be written as the following induced representation:
$$\nu^{r_1} \tau_1 \times \cdots \times \nu^{r_m} \tau_m \rtimes \pi_t$$
where $\tau_i$ is a tempered representation of $GL$ for each $i=1, \cdots, m$ and $\pi_t$ is a tempered generic representation of $G_{n'}$ for some $n' < n$. Since $\pi$ is non-tempered, $m \geq 1$.

Now, we consider the $L$-function $L(s, \widetilde{\tau_j} \times \pi)$. Since the $L$-functions from Langlands-Shahidi method are defined by the Langlands classification, for any admissible representation $\rho$ we have
$$L(s, \rho \times \pi) = L(s, \rho \times \pi_t) \displaystyle\prod\limits_{i=1}^{m} L(s+r_i, \rho \times \tau_i)L(s-r_i, \rho \times \widetilde{\tau_i} \otimes (\omega_{\pi} \circ det)).$$
As in the proof of Lemma \ref{tempered $L$-packet}, we conclude that $L(s, \rho \times \pi)$ has a pole at $s=r_j>0$ when $\rho$ is contragredient of $\widetilde{\tau_i} \otimes (\omega_{\pi} \circ det)$ and this contradicts the tempered $L$-function conjecture \cite{Heiermann_Opdam:2013}. Therefore, $\pi$ is also a tempered representation. 
\end{proof}

Let us now introduce the strong version of the generic Arthur packet conjecture.

\begin{conjecture}\label{Strong G}
Let $\psi, \phi_{\psi},$ be as in Theorem \ref{ConjG}. Let $\Pi(\phi_{\psi})$ be an $L$-packet that corresponds to $\phi_{\psi}$ that is defined in Definition \ref{Def:generic L-packet}, which means we also assume that $\Pi(\phi_{\psi})$ contains a generic member. Then the $L$-packet $\Pi(\phi_{\psi})$ is a tempered $L$-packet.
\end{conjecture}

Now, we are ready to prove a strong version of the generic Arthur packet conjecture in the case of $GSpin$ groups.

\begin{thrm}\label{StrongConjG}
Let $\textbf{G}$ be a $GSpin$ groups and let $\psi \in \Psi(G)$ be an Arthur parameter. Suppose the $L$-packet $\Pi(\phi_{\psi})$ that corresponds to the $L$-parameter $\phi_{\psi}$ has a generic member, then $\Pi(\phi_{\psi})$ is a tempered $L$-packet.
\end{thrm}

\begin{proof}
Let $\pi$ be a generic member in $\Pi(\phi_{\psi})$. 
Theorem \ref{ConjG:GSpin} implies that the corresponding $L$-parameter $\phi_{\psi}: L_F \rightarrow \,^{L}G_n = \hat{G}_n \rtimes L_F$ is a tempered $L$-parameter. Therefore, an image $\phi_{\psi}(W_F)$ in $\hat{G}_n$ is bounded. Since $\hat{G}_n$ is either $GSp_{2n}(\mathbb{C})$ or $GSO_{2n}(\mathbb{C})$, we have a natural embedding 
$$\iota: \,^{L}G_n \hookrightarrow \,^{L}GL_{2n} = GL_{2n}(\mathbb{C}) \rtimes L_F.$$ Then, an image $\iota \circ \phi_{\psi}(W_F)$ in $\hat{GL}_{2n}$ is also bounded. Therefore, $\iota \circ \phi_{\psi}$ becomes a tempered $L$-parameter for $GL_{2n}$. By the local Langlands correspondence for general linear groups \cite{Harris_Taylor:2001, Henniart:2000}, there exists an irreducible admissible representation $\Pi'$ that corresponds to $\iota \circ \phi_{\psi}$ such that
\begin{align}\label{E:GL}
\gamma(s, \rho \times \Pi', \psi_F) = \gamma(s, \phi_{\rho} \otimes (\iota \circ \phi_{\psi}), \psi_F) \textit{\ \ and \ \ } L(s, \rho \times \Pi') = L(s, \phi_{\rho} \otimes (\iota \circ \phi_{\psi}))
\end{align}
for any irreducible admissible representation $\rho$ of $GL$, where $\phi_{\rho}$ is the $L$-parameter that corresponds to $\rho$ through the local Langlands correspondence for general linear groups \cite{Harris_Taylor:2001, Henniart:2000}. Here, the local factors in the left hand side are Rankin-Selberg local factors \cite{Jacquet_PS_Shalika:1983} and the local factors in the right hand side are Artin factors. The equalities \eqref{E:GL} also imply the following:
$$\gamma(s, \rho \times \pi, \psi_F) = \gamma(s, \phi_{\rho} \otimes \phi_{\psi}, \psi_F) = \gamma(s, \phi_{\rho} \otimes (\iota \circ \phi_{\psi}), \psi_F)= \gamma(s, \rho \times \Pi', \psi_F)$$
and
$$L(s, \rho \times \pi) = L(s, \phi_{\rho} \otimes \phi_{\psi}) = L(s, \phi_{\rho} \otimes (\iota \circ \phi_{\psi}))= L(s, \rho \times \Pi').$$
Therefore, $\Pi'$ is a local functorial lift of $\pi$.

In the case of general linear groups, it is known that an irreducible admissible representation that corresponds to a tempered $L$-parameter is a tempered representation (\cite{Harris_Taylor:2001, Henniart:2000, Langlands:1989} or Theorem 1.3.1 in \cite{Arthur:2013}). Therefore, $\Pi'$ is a tempered representation of $GL$ since $\iota \circ \phi_{\psi}$ is a tempered $L$-parameter for $GL$. In other words, we conclude that the local functorial lift $\Pi'$ of $\pi$ is a tempered representation of $GL$. Then, Lemma \ref{TT} implies that $\pi$ is a tempered representation as well. Now, it remains to prove that all other members in $\Pi(\phi_{\psi})$ are tempered as well. Since $\Pi(\phi_{\psi})$ contains a tempered generic representation $\pi$, Lemma \ref{tempered $L$-packet} implies that $\Pi(\phi_{\psi})$ is a tempered $L$-packet.
\end{proof}

\begin{remark}
Non-triviality of $\psi$ to the second $SL_2(\mathbb{C})$ gives non-tempered representation in an $L$-packet. More precisely, assume that $\Pi(\phi_{\psi})$ is a tempered $L$-packet. Suppose also that $\psi|SL_2(\mathbb{C})$ is non-trivial. Then, $\phi_{\psi}(W_F)|\widehat{G}$ is unbounded since $\phi_{\psi}(w^m)=\psi(w^m, \begin{pmatrix} |w|^{\frac{m}{2}} & 0 \\ 0 & |w|^{-\frac{m}{2}} \end{pmatrix})$ for any positive integer $m$. This also implies that the corresponding Langlands parameter for general linear groups through local functorial lift is also unbounded. Therefore, the functorial lift is non-tempered representation of general linear groups. Since tempered representation of $GSpin$ can not be lifted to non-tempered representation of $GL$, $\Pi(\phi_{\psi})$ contains non-tempered representation, which contradicts the assumption. Therefore, $\psi|SL_2(\mathbb{C})$ is trivial. We conclude that $\psi|L_F=\phi_{\psi}$ is also tempered.

\end{remark}

\begin{remark}\label{Classical groups}
The techniques used in this section can be applied to the cases of classical groups and will produce more interesting applications since we can compare our definition of $L$-packets with the works of Arthur and Mok \cite{Arthur:2013, Mok:2015} in those cases. We leave this for our future work.
\end{remark}

\proof[Acknowledgements]
This project is suggested by my advisor, F. Shahidi and the part of this paper contains the main result of my doctoral dissertation at Purdue University (Chapter \ref{Sec4}). I would like to express my deepest gratitude to F. Shahidi for his constant encouragement and help. I also thanks M. Asgari, M. Harris, and W. Li for their questions and discussion during my talks at Morningside center and at Midwest Representation Theory conference at University of Iowa (in honor of P. Kutzko's 70th birthday conference). I started to work on chapter \ref{Sec5} while I tried to answer W. Li's question and Remark \ref{Harris}(iii) is pointed out by M. Harris and discussion with M. Asgari. I also thanks D. Goldberg, V. Heiermann, M. Krishnamurthy, J. Lau, B. Liu, L. Lomel\'i, C. Mok, and S. Varma for many helpful discussions and suggestions. I am grateful to Max-Planck-Institut f\"ur Mathematik for providing excellent working conditions during a two-month visit (June 2015 - July 2015). The part of Chapter \ref{Sec5} was produced while I was visiting the Institute. This work was supported by the National Research Foundation of Korea (NRF) grant funded by the Korea government (MSIP) (No. 2017R1C1B2010081).

\end{document}